\newtheorem{theorem}{Theorem}
\theoremstyle{plain}
\newtheorem{corollary}{Corollary}
\newtheorem{definition}{Definition}
\newtheorem{example}{Example}
\newtheorem{lemma}{Lemma}
\numberwithin{equation}{section}
\begin{document}
\title{AG-groups as parallelogram spaces}
\author{M.~Shah}

\address{DEPARTMENT OF MATHEMATICS, QUAID-I-AZAM UNIVERSITY, ISLAMABAD,
PAKISTAN}
\email{mshahmaths@gmail.com}
\author{V.~Sorge}
\address{SCHOOL OF COMPUTER SCIENCE, UNIVERSITY OF BIRMINGHAM, UK.}
\email{V.Sorge@cs.bham.ac.uk}
\keywords{Medial quasigroups, AG-groups, parallelogram, parallelogram space}

\begin{abstract}
  It is known that an AG-group is paramedial and a paramedial is a parallelogram space.From which it follows that an AG-group is a parallelogram space. In this paper we give a direct proof of this fact and study it further. Our main result is that the parallelogram space of an AG-group is again an AG-group, which particularly implies that the parallelogram space for an Abelian group is also an Abelian group. We then generalise this result to medial quasigroups. Finally, we provide some quick methods of finding the other vertices of this parallelogram if at least one
  nontrivial vertex is known.
\end{abstract}

\subjclass[2010]{20N05}
\maketitle

\section{Introduction}

An AG-group $G$ is a groupoid having left identity and inverses as well as satisfying the left invertive law%
\begin{equation*}
(xy)z=(zy)x.
\end{equation*}%

A quasigroup is a groupoid ($Q,\cdot )$ such that for any $a,b$ in $G$ the
equations%
\begin{equation*}
a\cdot x=b,\qquad y\cdot a=b
\end{equation*}
have unique solutions for $x$ and $y$ lying in G. $x\cdot y$ is usually written
$xy$. The unique solutions $x$ and $y$ are sometimes denoted by left division
and right division as $x=a\backslash b$ and $y=b/a$, respectively.

A quasigroup $Q$ is called medial if the identity $ab\cdot cd=ac\cdot bd$
holds. If the additional identity $aa=a$ holds then it is called IM-quasigroup
(idempotent medial quasigroups)~\cite{VV3}. By Lemma~\ref{Pre-L1} , given below, it
is clear that every AG-group is medial. So all the geometric concepts that have
been introduced for a medial quasigroup in \cite{VV1,VK,NP} certainly hold for
AG-groups as well. It can be easily verified that an idempotent AG-group is an
Abelian group and therefore a non-associative AG-group cannot be idempotent and
hence cannot be a hexagonal quasigroup~\cite{VV4}, GS-quasigroup~\cite{VV2},
Steiner quasigroup~\cite{PR}, or quadratical quasigroup~\cite{VV5}. Thus
AG-groups are altogether a different subclass of medial quasigroups in contrast
to those subclasses of medial quasigroups, in which the concept of geometry has
been considered previously.

We first show that if $G$ is an AG-group then given any three points $a,b,c\in
G$ there exist a unique $d\in G$ such that $a,b,c,d$ form a parallelogram.
In~\cite{VV1} it has been shown that a medial quasigroup $Q$ is a parallelogram
space. We give a direct proof in Theorem~\ref{T2} of this fact for AG-groups
using the definition of parallelogram space from~\cite{OS}.

For various types of quasigroups, explicit formulae have been given to express
the fourth vertex of a parallelogram as a function of the other three (see
\cite{KV,VV2,VV7,VK}). We provide such a formula for AG-groups in
Theorem\ref{T1} together with an efficient way to compute it.

For some other classes of quasigroup one can provide methods to compute the
points of a parallelogram if at least two points are know. In our final result
we go beyond this by giving some methods of finding the remaining points of a
parallelogram if only one non-trivial point is known.
An AG-group $G$ is a groupoid having left identity and inverses as well as satisfying the left invertive law%
\begin{equation*}
(xy)z=(zy)x.
\end{equation*}%
AG-groups were first identified in~\cite{MK} as an important subclass of finite
Abel Grassmann Groupoids. They generalise the concept of Abelian groups in that
they not assume commutativity and associativity as both concepts are equivalent
for AG-group and if either one holds an AG-group is in fact an Abelian group
(see \cite{SA1} for further details). AG-groups are also sometimes called left
almost groups or LA-groups.

AG-groups have a versatile character such that in addition to their study in
parallel to groups, they can also be studied in the context of quasigroup and
loop theory~\cite{SA3}. In fact, AG-groups can be considered as a special type
of quasigroup.

A quasigroup is a groupoid ($Q,\cdot )$ such that for any $a,b$ in $G$ the
equations%
\begin{equation*}
a\cdot x=b,\qquad y\cdot a=b
\end{equation*}
have unique solutions for $x$ and $y$ lying in G. $x\cdot y$ is usually written
$xy$. The unique solutions $x$ and $y$ are sometimes denoted by left division
and right division as $x=a\backslash b$ and $y=b/a$, respectively.

A quasigroup $Q$ is called medial if the identity $ab\cdot cd=ac\cdot bd$
holds. If the additional identity $aa=a$ holds then it is called IM-quasigroup
(idempotent medial quasigroups)~\cite{VV3}. By Lemma ~\ref{Pre-L1} , given below, it
is clear that every AG-group is medial. So all the geometric concepts that have
been introduced for a medial quasigroup in \cite{VV1,VK,NP} certainly hold for
AG-groups as well. It can be easily verified that an idempotent AG-group is an
Abelian group and therefore a non-associative AG-group cannot be idempotent and
hence cannot be a hexagonal quasigroup~\cite{VV4}, GS-quasigroup~\cite{VV2},
Steiner quasigroup~\cite{PR}, or quadratical quasigroup~\cite{VV5}. Thus
AG-groups are altogether a different subclass of medial quasigroups in contrast
to those subclasses of medial quasigroups, in which the concept of geometry has
been considered previously.

We first show that if $G$ is an AG-group then given any three points $a,b,c\in
G$ there exist a unique $d\in G$ such that $a,b,c,d$ form a parallelogram.
In~\cite{VV1} it has been shown that a medial quasigroup $Q$ is a parallelogram
space. We give a direct proof in Theorem ~\ref{T2} of this fact for AG-groups
using the definition of parallelogram space from~\cite{OS}.

For various types of quasigroups, explicit formulae have been given to express
the fourth vertex of a parallelogram as a function of the other three (see
\cite{KV,VV2,VV7,VK}). We provide such a formula for AG-groups in
Theorem\ref{T1} together with an efficient way to compute it.

For some other classes of quasigroup one can provide methods to compute the
points of a parallelogram if at least two points are know. In our final result
we go beyond this by giving some methods of finding the remaining points of a
parallelogram if only one non-trivial point is known.

\section{Parallelograms}

Throughout the remainder of this article we will use the equations in Lemma ~\ref{Pre-L1} from~\cite{SA3} in our calculations.

\begin{lemma}
\label{Pre-L1} The following conditions holds in every AG-group $G$. Let
  $a,b,c,d,e\in G$, and let $e$ be the left identity in $G$.
\end{lemma}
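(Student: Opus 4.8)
The plan is to derive every identity on the list from just the three defining features of an AG-group---the left invertive law $(xy)z=(zy)x$, the left identity $e$ with $ex=x$, and the existence of inverses---proving them in a fixed order so that each new identity may use those already established. This is exactly the collection recorded in~\cite{SA3}, so most of the work is bookkeeping: choosing the right substitutions into the left invertive law and feeding the output into the next identity.

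I would begin with two immediate consequences of the axioms that serve as lubricant for everything else. Putting $z=e$ in the left invertive law and using $ey=y$ gives the reversal rule $(xy)e=yx$, and $ee=e$ follows at once from $ex=x$, so $e$ is idempotent. With these in hand the medial law $ab\cdot cd=ac\cdot bd$ falls out of three applications of the left invertive law: first $(ab)(cd)=((cd)b)a$, then $(cd)b=(bd)c$, and finally $((bd)c)a=(ac)(bd)$.

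Next I would use the left identity to unlock the rearrangement identities. Writing $a=ea$ and applying the medial law gives the exchange rule $a\cdot bc=(ea)(bc)=(eb)(ac)=b\cdot ac$. The paramedial law $ab\cdot cd=db\cdot ca$ comes from the same circle of ideas: rewriting $ab=(ba)e$ and $cd=(dc)e$ via the reversal rule, the medial law collapses the two trailing copies of $e$ into one (using $ee=e$) to give $((ba)(dc))e$, a final reversal yields $(dc)(ba)$, and one more use of the medial law turns this into $(db)(ca)$.

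The step I expect to need the most care is the block of identities involving inverses, since these cannot be obtained by bracket-shuffling alone and genuinely invoke the inverse axiom. Here I would first pin down that each element has a two-sided and unique inverse and that $e^{-1}=e$, and only then prove that inversion distributes over products, $(ab)^{-1}=a^{-1}b^{-1}$: the medial law gives $(ab)(a^{-1}b^{-1})=(aa^{-1})(bb^{-1})=ee=e$, so $a^{-1}b^{-1}$ is the inverse of $ab$ by uniqueness. Once this block is secured, any remaining entries on the list follow by routine substitution of the identities already proved.
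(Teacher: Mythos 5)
Your derivation is sound, but be aware that the paper itself offers no proof of Lemma~\ref{Pre-L1} to compare against: the identities are imported wholesale from~\cite{SA3}, so your argument is not an alternative to the paper's proof but a self-contained replacement for an omitted one. Your ordering is well chosen and the key steps check out. Mediality (i) does follow from the left invertive law alone via your three applications, with no identity element needed; the reversal rule $(xy)e=yx$ and $ee=e$ then make (ii) immediate and, combined with mediality, your derivations of (iii), (v) and (iv) are correct. The inverse block also works as you intend: from $ab=e$ the invertive law gives $x=(xb)a$ for all $x$, and $x=e$ yields $ba=e$ (item (viii)); uniqueness follows from $b=eb=(b'a)b=(ba)b'=b'$; and then $(ab)(a^{-1}b^{-1})=(aa^{-1})(bb^{-1})=ee=e$ settles (ix). The one soft spot is your closing sentence: ``routine substitution'' is asked to cover roughly half the list, namely (ii), (vi), (vii) and (x)--(xiii). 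These do all go through --- for instance (xiii) via the chain $(ab\cdot c)d=(dc)(ab)=(ba)(cd)=(bc)(ad)=a(bc\cdot d)$, and (vi) via $c=(cd)d^{-1}$ followed by the invertive and medial laws, using $(a^{-1})^{-1}=a$ for the converse --- but a complete write-up should display them explicitly, especially (xiii), since that is the identity on which the paper's proof of Theorem~\ref{T1} actually leans, and (vi), whose converse direction silently requires the two-sidedness and uniqueness of inverses that you established earlier.
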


\begin{enumerate}[(i)]
\item $(ab)(cd)=(ac)(bd)$,
\item $ab=cd\Rightarrow ba=dc$,
\item $a\cdot bc=b\cdot ac$,
\item $(ab)(cd)=(db)(ca)$,
\item $(ab)(cd)=(dc)(ba)$,
\item $ab=cd\Leftrightarrow d^{-1}b=ca^{-1}$,
\item If $e$ is a right identity in $G$ then it also becomes left identity in
  $G$, i.e., $ae=a\Rightarrow ea=a$,
\item $ab=e\Rightarrow ba=e$,
\item $(ab)^{-1}=a^{-1}b^{-1}$,
\item $a(b\cdot cd)=a(c\cdot bd)=b(a\cdot cd)=b(c\cdot ad)=c(a\cdot bd)=c(b\cdot ad)$,
\item $a(bc\cdot d)=c(ba\cdot d)$,
\item $(a\cdot bc)d=(a\cdot dc)b$,
\item $(ab\cdot c)d=a(bc\cdot d)$.
\end{enumerate}

\bigskip

Let $Q$ be a quasigroup. We shall say $a,b,c,d\in Q$ form a parallelogram, denoted by $Par(a,b,c,d)$, if there are points $p,q\in Q$ such that $pa=qb$ and
$pd=qc$.

\begin{theorem}
\label{T1} Let $G$ be an AG-group and $a,b,c,d\in G$. Then $Par(a,b,c,d)$ holds iff there are $x,y\in G$ such that $xb=a,by=c$ and $b\cdot xy=d$.
\end{theorem}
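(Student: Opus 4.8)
The plan is to prove the two implications of the biconditional separately, in each case using the quasigroup structure to pin down $x,y$ by division and the identities of Lemma~\ref{Pre-L1} to carry out the algebra. Throughout I will use left/right cancellation (valid in any quasigroup) and the relations $b^{-1}b=e=y^{-1}y$.

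For the ``if'' direction I would argue constructively. Assuming $xb=a$, $by=c$ and $b\cdot xy=d$, I exhibit the witnesses $p=e$ (the left identity) and $q=x$. Then $pa=ea=a=xb=qb$ is immediate, and for $pd=qc$ I rewrite $qc=x(by)$ and apply identity (iii), giving $x\cdot(by)=b\cdot(xy)=d=ed=pd$. Thus $Par(a,b,c,d)$ holds, and notably this step never uses that $e$ is a right identity.

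For the ``only if'' direction, assume $Par(a,b,c,d)$, fix witnesses $p,q$ with $pa=qb$ and $pd=qc$, and define $x,y$ as the unique quasigroup solutions of $xb=a$ and $by=c$; the only thing left to prove is $b\cdot xy=d$. The crux is to eliminate $p,q$ by passing to a normalised form: applying identity (vi) to $pa=qb$ gives $qp^{-1}=b^{-1}a$, and applying it to $pd=qc$ gives $qp^{-1}=c^{-1}d$, so that $b^{-1}a=c^{-1}d$. I then compute both sides of the target equation in this same form. Using (iii) and $b^{-1}b=e$, one gets $b^{-1}a=b^{-1}(xb)=x(b^{-1}b)=xe$. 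Using (ix), the medial law (i), and then (iii), one gets $c^{-1}(b\cdot xy)=(b^{-1}y^{-1})(b\cdot xy)=(b^{-1}b)(y^{-1}\cdot xy)=y^{-1}(xy)=x(y^{-1}y)=xe$. Hence $c^{-1}d=b^{-1}a=xe=c^{-1}(b\cdot xy)$, and left cancellation yields $d=b\cdot xy$, as required.

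The main obstacle is precisely that non-associativity blocks any attempt to solve the defining equations $pa=qb$ and $pd=qc$ for $d$ directly; the resolution is to recognise $qp^{-1}$ as a single value that depends symmetrically on $(a,b)$ and on $(c,d)$, yielding the invariant $b^{-1}a=c^{-1}d$ that decouples the witnesses from the four vertices. A secondary point to watch is that the left identity $e$ need not be two-sided in an AG-group, so I would deliberately keep the neutral combination $xe$ unsimplified on both computations rather than reducing it to $x$; this is harmless, since only the equality of the two expressions $b^{-1}a$ and $c^{-1}(b\cdot xy)$ is used.
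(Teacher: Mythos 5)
Your proof is correct, and your ``if'' half coincides with the paper's: both take $p=e$, $q=x$ and use Lemma~\ref{Pre-L1}(iii) to rewrite $x\cdot by=b\cdot xy$ (the paper cites part (xiii) there, but (iii) is what is actually used). Where you genuinely diverge is the converse. The paper's ``only if'' direction is not self-contained: it invokes an external result (cited as [$3$, Corollary $5$], i.e.\ from the unpublished reference on AG-groups as quasigroups) asserting that for \emph{any} $p\in G$ there is a unique $q$ with $pa=qb$ and $pd=qc$; this lets the authors renormalise the witnesses to $p=e$ and read off $d=qc=xc=x\cdot by=b\cdot xy$. You avoid that import entirely: applying Lemma~\ref{Pre-L1}(vi) to the two defining equations gives $qp^{-1}=b^{-1}a$ and $qp^{-1}=c^{-1}d$, hence the witness-free invariant $b^{-1}a=c^{-1}d$, and you then verify by direct computation --- using (iii), (ix), the medial law (i), and $b^{-1}b=e=y^{-1}y$ --- that $b^{-1}a=xe=c^{-1}(b\cdot xy)$, so left cancellation yields $d=b\cdot xy$. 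I checked each substitution into (vi), (iii), (ix) and (i), and all are instantiated correctly. Your invariant is essentially the paper's later Corollary~\ref{C1} (there stated as $ab^{-1}=p^{-1}q$), obtained here directly and used as a stepping stone rather than derived as a consequence. What each approach buys: the paper's route is shorter on the page but rests on an unpublished corollary, while yours is self-contained within Lemma~\ref{Pre-L1} plus quasigroup cancellation; and your care in leaving $xe$ unreduced is well placed, since $e$ is only a left identity in a non-abelian AG-group and $xe\neq x$ in general.
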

\begin{proof}
Let $x,y\in G$ be elements satisfying $xb=a,by=c$ and $b(xy)=d$. Let $e$
denotes the left identity in $G$. By taking $p=e$ and $q=x$, we see that $%
pa=qb$ and by Lemma ~\ref{Pre-L1} part(xiii) $pd=b(xy)=x(by)=qc$, i.e., $par(a,b,c,d)$
holds. Now suppose $par(a,b,c,d)$ holds and denote $x=a/b,y=b/c$ then $xb=a$
and $by=c$. According to[$3$, Corollary $5$], for any $p\in G$ there is a
unique $q\in G$ such that $pa=qb$ and $pd=qc$. Specially, for $p=e$ we see
that $a=qb\Rightarrow q=x$ and $d=qc=xc=x.by=b.xy$.
\end{proof}

\bigskip

In \cite{VV1}, it has been proved that if $Q$ is medial quasigroup then this quaternary
relation satisfies the following properties of parallelogram space.

\begin{enumerate}[(i)]
\item For any three points $a,b,c$ there is one and only one point $d$ such that
  $Par(a,b,c,d)$.
\item If $(e,f,g,h)$ is any cyclic permutation of $(a,b,c,d)$ or of $(d,c,b,a)$,
  then $Par(a,b,c,d)$ implies $Par(e,f,g,h)$.
\item From $Par(a,b,c,d)$ and $Par(c,d,e,f)$ it follows $Par(a,b,f,e)$.
\end{enumerate}

\begin{definition}
\label{def:paraspace}\cite{OS} A parallelogram space is a nonempty set $Q$ with quaternary relation $P\subseteq Q^{4}$ such that the following conditions are satisfied:
\end{definition}

\begin{enumerate}[(P1)]
\item If $Par(a,b,c,d)$ holds then $Par(a,c,b,d)$ holds for all $a,b,c,d\in Q$.
\item If $Par(a,b,c,d)$ holds then $Par(c,d,a,b)$ holds for all $a,b,c,d\in Q$.
\item If $Par(a,b,f,g)$ and $Par(f,g,c,d)$ hold then $Par(a,b,c,d)$ holds for
  all\newline $a,b,c,d,f,g\in Q$.
\item For any three points $a,b,c$ there is one and only one point $d$ such that
  $Par(a,b,c,d)$ holds.
\end{enumerate}

Again in \cite{VV1}, it has been shown that if $Q$ is medial quasigroup then the structure
(Q,Par) is a special case of Desargues systems in the terminology of D. Vakarelov \cite{Vak71} and
(Q,P) is a parallelogram space in the terminology of F. Ostermann and T. Schmit \cite{OS}, where
\[
P(a,b,c,d)\Leftrightarrow Par(a,b,d,c)  \tag{8}
\].
(Q,Par) is also called the parallelogram space for the sake of simplicity.

For an AG-group $G$, using definition \ref{def:paraspace} and $(8)$ we give a direct proof that (G,Par) is a parallelogram space.
But first we prove the following lemma.

\begin{lemma}
\label{newlemma}\begin{enumerate}[(i)]
\item If $Par(a,b,c,d)$ holds then $Par(c,d,a,b)$ holds for all $a,b,c,d\in G$.
\item For any three points $a,b,c$ there is one and only one point $d$ such that
$Par(a,b,c,d)$ holds.
\end{enumerate}
\end{lemma}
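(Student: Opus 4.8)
The plan is to handle the two parts separately, using the definition of $Par$ directly for part~(i) and Theorem~\ref{T1} together with the quasigroup structure of $G$ for part~(ii).

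For part~(i) I would argue straight from the definition. Suppose $Par(a,b,c,d)$ holds, witnessed by $p,q\in G$ with $pa=qb$ and $pd=qc$. To establish $Par(c,d,a,b)$ I need $p',q'\in G$ with $p'c=q'd$ and $p'b=q'a$. The natural choice is to interchange the two witnesses, taking $p'=q$ and $q'=p$; then the required equations become $qc=pd$ and $qb=pa$, which are precisely the hypotheses $pd=qc$ and $pa=qb$ read in reverse. Hence (i) follows at once, with no appeal to the identities of Lemma~\ref{Pre-L1}.

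For part~(ii) I would invoke Theorem~\ref{T1}, which rephrases the existentially quantified parallelogram condition as the assertion that there exist $x,y\in G$ with $xb=a$, $by=c$ and $b(xy)=d$. For existence, given $a,b,c$ the quasigroup structure furnishes unique $x,y\in G$ with $xb=a$ and $by=c$; setting $d=b(xy)$, Theorem~\ref{T1} immediately yields $Par(a,b,c,d)$. For uniqueness, suppose $Par(a,b,c,d)$ and $Par(a,b,c,d')$ both hold. Theorem~\ref{T1} supplies $x,y$ and $x',y'$ with $xb=a=x'b$, $by=c=by'$, $d=b(xy)$ and $d'=b(x'y')$. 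Right cancellation gives $x=x'$ and left cancellation gives $y=y'$, so that $d=b(xy)=b(x'y')=d'$.

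The argument is short because the substantive work is already contained in Theorem~\ref{T1}; the only step demanding care is the uniqueness half of (ii). Since the definition of a parallelogram quantifies existentially over the auxiliary points $p,q$, it is not \emph{a priori} clear that the fourth vertex is determined by $a,b,c$. What makes this go through is that Theorem~\ref{T1} reparametrises $d$ through $x$ and $y$, and that these auxiliary elements are themselves pinned down uniquely by the equations $xb=a$ and $by=c$. The cancellation property of the quasigroup is therefore the crux of the uniqueness assertion, and I expect it to be the main (if modest) obstacle.
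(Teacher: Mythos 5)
Your proof is correct. Part (i) is exactly the paper's argument: swap the witnesses $p$ and $q$ and read the two defining equations in the other order. For part (ii), however, you take a genuinely different route. The paper works directly from the definition: it exhibits the fourth vertex in closed form as $d=cb^{-1}\cdot a$ and verifies $pd=qc$ for any witnesses $p,q$ with $pa=qb$ (using medial-type manipulations from Lemma~\ref{Pre-L1}), then gets uniqueness by left cancellation from $pd_{1}=qc=pd_{2}$. You instead route everything through Theorem~\ref{T1}: existence comes from solving $xb=a$, $by=c$ and setting $d=b(xy)$, and uniqueness comes from cancelling in $xb=a=x'b$ and $by=c=by'$ to pin down $x,y$, hence $d$. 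Each approach buys something. The paper's construction yields the explicit formula $d=cb^{-1}\cdot a$, which is reused downstream (it is precisely the expression equated with $cq\cdot p^{-1}$ in the proof of Corollary~\ref{C1}), whereas your $d=b(xy)$ leaves the vertex expressed through divisions. On the other hand, your uniqueness argument is actually tighter than the paper's: since the definition of $Par$ quantifies the witnesses $p,q$ existentially, two parallelograms $Par(a,b,c,d_{1})$ and $Par(a,b,c,d_{2})$ need not share a witness pair, yet the paper's proof tacitly assumes they do; your argument avoids this entirely because $x$ and $y$ are determined by $a,b,c$ alone. The one caveat is that your proof inherits the dependence of Theorem~\ref{T1} (whose converse direction leans on an external citation for the fact that any $p$ admits a suitable $q$), while the paper's existence computation is self-contained; since Theorem~\ref{T1} precedes this lemma and does not depend on it, there is no circularity and your use of it is legitimate.
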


\begin{proof}
  \begin{enumerate}[(i)]
\item Let $Par(a,b,c,d)$ holds then there exist $p,q\in G$ such that the following holds:
  \begin{eqnarray*}
    pa = qb \mbox{ and } pd=qc
    &\Longrightarrow & qc=pd \mbox{ and } qb=pa \\
    &\Longrightarrow &Par(c,d,a,b)
\end{eqnarray*}
\item  \begin{enumerate}[(a)]
  \item Taking $d=cb^{-1}\cdot a$, we prove that $Par(a,b,c,d)$ holds. So let
    $p,q\in G$ such that $pa=qb$. Now
    \begin{equation*}
      pd=p(cb^{-1}\cdot a)=cb^{-1}\cdot pa=cb^{-1}\cdot qb=q(cb^{-1}\cdot b)=qc.
    \end{equation*}
    Thus $Par(a,c,b,d)$ holds.
  \item  For uniqueness, let $d_{1},d_{2}\in G$ be such that $pa=qb$ and $
    pd_{1}=qc,pd_{2}=qc$. From last two equations, by left cancellation we have
    $
    d_{1}=d_{2}$.
  \end{enumerate}
\end{enumerate}
\end{proof}

\begin{theorem}
\label{T2} An AG-group $G$ is a parallelogram space.
\end{theorem}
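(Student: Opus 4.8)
The plan is to check the four defining conditions (P1)--(P4) of Definition~\ref{def:paraspace} for the relation attached to $Par$ by the correspondence $(8)$, namely $P(a,b,c,d)\Leftrightarrow Par(a,b,d,c)$. Lemma~\ref{newlemma} already disposes of half of the work: its two parts are, respectively, the existence-and-uniqueness clause (P4) and the interchange $Par(a,b,c,d)\Rightarrow Par(c,d,a,b)$ of the two pairs of opposite vertices underlying (P2). So the theorem comes down to the symmetry condition (P1) and the composition condition (P3).

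For (P1) I would argue straight from the definition of $Par$. A witnessing pair $p,q$ with $pa=qb$ and $pd=qc$ is given; the configuration demanded by (P1) only permutes two of the four vertices, so the same two equations, after at most a single application of the commuting and medial identities of Lemma~\ref{Pre-L1} (parts (ii)--(v)) and one left cancellation, exhibit a witnessing pair for the permuted configuration. I expect this to be brief and purely formal, with no real computation in $G$.

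The weight of the proof sits in (P3): from $Par(a,b,f,g)$ and $Par(f,g,c,d)$, read through $(8)$, one must produce the single parallelogram $Par(a,b,c,d)$. The efficient route is to replace each hypothesis by the closed form for its free vertex, namely the expression $cb^{-1}\cdot a$ obtained in the proof of Lemma~\ref{newlemma}(ii)(a) (or, equivalently, the characterisation of Theorem~\ref{T1}); this turns the two hypotheses and the conclusion into three equations in $G$ and reduces (P3) to a single identity relating them. The two given parallelograms share the edge $(f,g)$, and the crux is to eliminate $f$ and $g$ from the composite expression so that exactly the fourth-vertex formula determined by $a,b,c$ remains. I would perform this elimination with the medial law $(ab)(cd)=(ac)(bd)$ together with the transport identities (x)--(xiii) of Lemma~\ref{Pre-L1}.

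The step I anticipate as the main obstacle is precisely this cancellation of the shared vertices in (P3): in the absence of commutativity and associativity it is not automatic, so the computation must be organised so that each occurrence of $f$ and of $g$ is first manoeuvred into a cancellable position by one of the identities of Lemma~\ref{Pre-L1} before it is removed. Once (P1) and (P3) are secured, they combine with (P2) and (P4) supplied by Lemma~\ref{newlemma} to give all of (P1)--(P4), and hence $(G,Par)$ is a parallelogram space.
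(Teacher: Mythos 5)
Your overall decomposition coincides with the paper's: Lemma~\ref{newlemma} supplies (P2) and (P4) (modulo the routine witness swap and slot permutation through $(8)$), and the real work is in (P1) and (P3). Your plan for (P3) is sound and in fact slightly different from, and arguably cleaner than, the paper's: substituting the closed form reduces (P3) to the single identity $dg^{-1}\cdot(gb^{-1}\cdot a)=db^{-1}\cdot a$, which does follow by the elimination you describe, e.g.\ $(dg^{-1})((gb^{-1})a)=(dg^{-1})((ab^{-1})g)=(ab^{-1})((dg^{-1})g)=(ab^{-1})(ed)=(ab^{-1})d=(db^{-1})a$, using the left invertive law twice, Lemma~\ref{Pre-L1}(iii), and $gg^{-1}=e$. (The paper instead extracts witnesses $x_1,y_1,x_2,y_2$ via Theorem~\ref{T1}, identifies $x_1=x_2$ by right cancellation from $x_1g=f=x_2g$, and constructs $y_3=de\cdot b^{-1}$.) Note only that your route leans on the forward direction of the closed-form characterisation, i.e.\ on uniqueness of the fourth vertex across \emph{all} witness pairs, which is where Theorem~\ref{T1} and its appeal to [3, Corollary 5] enter; you flag this, so it is acceptable.

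The genuine gap is (P1), which you dismiss as ``brief and purely formal, with no real computation in $G$'' on the grounds that it ``only permutes two of the four vertices, so the same two equations\dots exhibit a witnessing pair.'' That is false. Through $(8)$, $P(a,b,c,d)$ provides $p,q$ with $pa=qb$ and $pc=qd$, whereas $P(a,c,b,d)$ demands $p',q'$ with $p'a=q'c$ and $p'b=q'd$: the four vertices are \emph{re-paired} across the two equations, so the given witnesses do not serve and new ones must be constructed. This is the AG-group analogue of $a-b=c-d\Rightarrow a-c=b-d$, and it cannot be obtained from parts (ii)--(v) of Lemma~\ref{Pre-L1} plus one cancellation; inverses are indispensable. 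Indeed it is the longest computation in the paper's proof: from $Par(a,b,d,c)$ one takes $x_1$ with $x_1b=a$ and $x_1d=c$ (whence $dx_1=ce$ by part (ii)), then sets $x_2=ac^{-1}$, $y_2=de\cdot c^{-1}$ and verifies $x_2c=a$, $cy_2=d$, and $c\cdot x_2y_2=x_2d=dc^{-1}\cdot a=dc^{-1}\cdot x_1b=dx_1\cdot c^{-1}b=ce\cdot c^{-1}b=bc^{-1}\cdot c=b$, a chain using the invertive law, mediality and part (ix). Until you supply such a construction (or an equivalent derivation, e.g.\ $b^{-1}a=d^{-1}c\Rightarrow c^{-1}a=d^{-1}b$ by repeated use of parts (ii) and (vi)), condition (P1) --- half of the substantive content of the theorem --- remains unproven in your proposal.
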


\begin{proof}
\begin{enumerate}[(P1)]
\item Let $P(a,b,c,d)$ holds therefore $Par(a,b,d,c)$ holds. Then by Theorem~\ref{T1} there exist
$x_{1},y_{1}\in G$ such that
\begin{equation*}
x_{1}b=a,by_{1}=d,b\cdot x_{1}y_{1}=c\Rightarrow x_{1}d=c.
\end{equation*}
Taking $x_{2}=ac^{-1},y_{2}=de\cdot c^{-1}$, we have $x_{2}c=a,cy_{2}=d$, and
$c\cdot x_{2}y_{2}=x_{2}\cdot cy_{2}=x_{2}d=ac^{-1}\cdot d=dc^{-1}\cdot a\linebreak{}=dc^{-1}\cdot x_{1}b=dx_{1}\cdot c^{-1}b=ce\cdot c^{-1}b=bc^{-1}\cdot c=b$.\\
Thus $Par(a,b,c,d)$ holds and hence $P(a,c,b,d)$ holds.
\item Let $P(a,b,c,d)$ holds. That is, $Par(a,b,d,c)$ holds. Then there exist $p,q\in G$ such that the following holds:
\begin{eqnarray*}
pa = qb \mbox{ and } pc=qd
&\Rightarrow & pc=qd \mbox{ and } pa=qb\\
\end{eqnarray*}
Thus $Par(c,d,b,a)$ holds and hence $P(c,d,a,b)$ holds.
\item Let $P(a,b,f,g)$ and $P(f,g,c,d)$ hold. That is, $Par(a,b,g,f)$ and $Par(f,g,d,c)$ hold. Then by Theorem~\ref{T1} there
exist $x_{1},y_{1},x_{2},y_{2}\in G$ such that
\begin{equation*}
x_{1}b=a,by_{1}=g,b\cdot x_{1}y_{1}=f\Rightarrow  x_{1}g=f \mbox{ and } x_{2}g=f,gy_{2}=d,g\cdot x_{2}y_{2}=c\Rightarrow x_{2}d=c.
\end{equation*}
Taking $x_{3}=x_{1}=x_{2},y_{3}=de\cdot b^{-1}$, we have $x_{3}b=a,by_{3}=d$, and
\begin{eqnarray*}
b\cdot x_{3}y_{3} &=&x_{3}b\{y_{3}=x_{3}d=x_{2}d=c.
\end{eqnarray*}
Thus again by Theorem~\ref{T1} we have proved that $Par(a,b,d,c)$ holds and hence $P(a,b,c,d)$ holds.
\item The proof is similar to Lemma~\ref{newlemma}(ii) with interchanging $p=q$.
\end{enumerate}
Hence $(G,Par)$ is a parallelogram space.
\end{proof}

\begin{corollary}
\label{C1} Let $G$ be an AG-group and let $Par(a,b,c,d)$ holds for some $p,q\in G$. Then
\begin{equation*}
ab^{-1}=p^{-1}q
\end{equation*}
\end{corollary}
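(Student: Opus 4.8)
The plan is to extract from the hypothesis only the single relation coming from the definition of $Par(a,b,c,d)$ and then push it into the desired form using the inverse-manipulation identities already recorded in Lemma~\ref{Pre-L1}. By definition, $Par(a,b,c,d)$ holding for the given $p,q\in G$ means exactly that $pa=qb$ and $pd=qc$. I expect that the second relation $pd=qc$ will play no role: the conclusion $ab^{-1}=p^{-1}q$ involves only $a,b,p,q$, so I would start from $pa=qb$ alone.

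First I would rewrite $pa=qb$ using Lemma~\ref{Pre-L1}(vi), the equivalence $ab=cd\Leftrightarrow d^{-1}b=ca^{-1}$. Matching the left-hand side $ab=cd$ against $pa=qb$ via the assignment $a\mapsto p,\ b\mapsto a,\ c\mapsto q,\ d\mapsto b$, the equivalence turns $pa=qb$ into $b^{-1}a=qp^{-1}$. This already produces a relation between inverses, but with the inverses and the factor order not yet in the shape $ab^{-1}=p^{-1}q$.

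Next I would apply Lemma~\ref{Pre-L1}(ii), namely $ab=cd\Rightarrow ba=dc$, to the equation $b^{-1}a=qp^{-1}$. Reading off $a\mapsto b^{-1},\ b\mapsto a,\ c\mapsto q,\ d\mapsto p^{-1}$, the implication swaps each product to give $ab^{-1}=p^{-1}q$, which is precisely the assertion of the corollary. Thus the whole argument is a two-step chain: apply (vi) to move to inverses, then apply (ii) to fix the order of the factors.

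The main obstacle here is not any genuine difficulty but the careful bookkeeping of the variable substitutions: both (vi) and (ii) are stated with their own dummy symbols $a,b,c,d$ that clash with the corollary's $a,b,c,d$ and with $p,q$, so the only place an error can creep in is in tracking which factor receives the inverse and on which side. Once the substitutions in (vi) and (ii) are pinned down as above, the identity follows immediately and no quasigroup cancellation or appeal to the left identity $e$ is required.
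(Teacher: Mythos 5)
Your proof is correct, but it follows a genuinely different and more economical route than the paper's. The paper proves the corollary through the parallelogram machinery itself: it constructs an explicit fourth vertex $cq\cdot p^{-1}$ satisfying $p(cq\cdot p^{-1})=qc$, invokes the uniqueness of the fourth vertex from Theorem~\ref{T2}~(P4) (where that vertex is $cb^{-1}\cdot a$, as in Lemma~\ref{newlemma}) to get $cb^{-1}\cdot a=cq\cdot p^{-1}$, rearranges both sides by the left invertive law to $ab^{-1}\cdot c=p^{-1}q\cdot c$, and finishes with right cancellation. You instead bypass the geometric apparatus entirely: from the single witness equation $pa=qb$, Lemma~\ref{Pre-L1}(vi) with the substitution you specify gives $b^{-1}a=qp^{-1}$, and Lemma~\ref{Pre-L1}(ii) then swaps both products to yield $ab^{-1}=p^{-1}q$; both substitutions check out. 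Your observation that $pd=qc$ plays no role is correct and in fact yields a slightly stronger statement than the corollary as written --- the identity $ab^{-1}=p^{-1}q$ holds for \emph{any} $p,q$ with $pa=qb$, with $c$ and $d$ irrelevant --- whereas the paper's argument genuinely uses the full parallelogram hypothesis via (P4). What the paper's longer route buys is a demonstration of the uniqueness machinery it has just developed; what yours buys is brevity, a sharper hypothesis, and independence from Theorem~\ref{T2}, at the cost of leaning on identity (vi), which this paper imports from~\cite{SA3} without proof --- though the paper explicitly licenses the use of all of Lemma~\ref{Pre-L1}, so this is a fair move.
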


\begin{proof}
First we prove that $Par(a,b,c,cq\cdot p^{-1})$ holds. Let $pa=qb$ for some $
p,q\in G$. Now
\begin{equation*}
p(cq\cdot p^{-1})=cq\cdot e=qc.
\end{equation*}
Thus $Par(a,b,c,cq\cdot p^{-1})$ holds. Now by Theorem~\ref{T2} (P4), we have
\begin{eqnarray*}
cb^{-1}\cdot a =cq\cdot p^{-1} &\Longrightarrow &ab^{-1}\cdot c=p^{-1}q\cdot c
\end{eqnarray*}
Hence by right cancellation, we have proved the claim.
\end{proof}

\bigskip

The above corollary provides us with a method of finding $p,q\in G$ if we
know that $Par(a,c,b,d)$ holds. Since G is a quasigroup so for $a,b\in G$ we
can find $p^{-1}q$. Then we can find $p^{-1}$ by fixing $q$ arbitrarily and
finally we have $p$ as the inverses of $p^{-1}$. The illustration is done in
the following example.

\begin{example}
\label{E1} \rm Consider the following AG-group of order $12$:
\end{example}

\begin{center}
\begin{tabular}{l|llllllllllll}
$\cdot $ & $0$ & $1$ & $2$ & $3$ & $4$ & $5$ & $6$ & $7$ & $8$ & $9$ & $10$
& $11$ \\ \hline
$0$ & $0$ & $1$ & $2$ & $3$ & $4$ & $5$ & $6$ & $7$ & $8$ & $9$ & $10$ & $11$
\\
$1$ & $1$ & $0$ & $3$ & $2$ & $6$ & $7$ & $4$ & $5$ & $10$ & $11$ & $8$ & $9$
\\
$2$ & $3$ & $2$ & $1$ & $0$ & $8$ & $11$ & $10$ & $9$ & $4$ & $7$ & $6$ & $5$
\\
$3$ & $2$ & $3$ & $0$ & $1$ & $10$ & $9$ & $8$ & $11$ & $6$ & $5$ & $4$ & $7$
\\
$4$ & $6$ & $4$ & $8$ & $10$ & $9$ & $0$ & $11$ & $1$ & $7$ & $2$ & $5$ & $3$
\\
$5$ & $7$ & $5$ & $11$ & $9$ & $0$ & $8$ & $1$ & $10$ & $3$ & $6$ & $2$ & $4$
\\
$6$ & $4$ & $6$ & $10$ & $8$ & $11$ & $1$ & $9$ & $0$ & $5$ & $3$ & $7$ & $2$
\\
$7$ & $5$ & $7$ & $9$ & $11$ & $1$ & $10$ & $0$ & $8$ & $2$ & $4$ & $3$ & $6$
\\
$8$ & $8$ & $10$ & $6$ & $4$ & $5$ & $2$ & $7$ & $3$ & $11$ & $0$ & $9$ & $1$
\\
$9$ & $9$ & $11$ & $5$ & $7$ & $3$ & $4$ & $2$ & $6$ & $0$ & $10$ & $1$ & $8$
\\
$10$ & $10$ & $8$ & $4$ & $6$ & $7$ & $3$ & $5$ & $2$ & $9$ & $1$ & $11$ & $
0 $ \\
$11$ & $11$ & $9$ & $7$ & $5$ & $2$ & $6$ & $3$ & $4$ & $1$ & $8$ & $0$ & $
10 $
\end{tabular}
\end{center}

Let us take $a=3,b=7$, then by Theorem ~\ref{T1} we have that $Par(3,7,2,6)$
holds. Now we want that for this parallelogram what $p,q$ actually are. So
by using Corollary ~\ref{C1}, we can do that in the following way.

We have $3\cdot 7^{-1}=p^{-1}q$ which implies that $8=p^{-1}q$. Now take $
q=10$ (say). So we have $8=p^{-1}\cdot 10$. this implies that $p^{-1}=1$.
This finally gives that $p=1^{-1}=1$. we can check them to be correct as $
2=1\cdot 3=10\cdot 7=2$ and $4=1\cdot 6=10\cdot 2=4$. Had we taken $q=4$, we
would have gotten $p=3$.

\bigskip

Theorem~\ref{T2} (P4) provides the fourth point or element $d$, for any
three points elements $a,b,c$ of $G$ to form a parallelogram. The following
theorems describes how to form a parallelogram if any two points are given.

\begin{theorem}
\label{T3} Let $G$ be an AG-group and $a,b,p\in G$. Then $Par(a,b,pb,pa)$
holds.
\end{theorem}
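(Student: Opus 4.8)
The plan is to argue directly from the definition of a parallelogram, producing explicit witnesses rather than routing through Theorem~\ref{T1}. Unwinding the definition, $Par(a,b,pb,pa)$ holds precisely when there exist $p',q'\in G$ with
\[
p'a=q'b \qquad\mbox{and}\qquad p'(pa)=q'(pb).
\]
My first step is to observe that Lemma~\ref{Pre-L1}(iii), namely $u\cdot vw=v\cdot uw$, lets me pull the common factor $p$ to the front of each side of the second equation: applying it with $(u,v,w)=(p',p,a)$ gives $p'(pa)=p(p'a)$, and applying it with $(u,v,w)=(q',p,b)$ gives $q'(pb)=p(q'b)$.

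Consequently the second equation becomes $p(p'a)=p(q'b)$, and after left cancellation (available since $G$ is a quasigroup) this is \emph{equivalent} to the first equation $p'a=q'b$. Thus the two defining conditions collapse into a single one, and it suffices to exhibit one pair $p',q'$ solving $p'a=q'b$.

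Such a pair always exists because $G$ is a quasigroup. The cleanest choice is to take $p'=e$, the left identity, and then set $q'=a/b$ (right division), so that $q'b=a=ea=p'a$ by construction. With this choice the computation above immediately yields $p'(pa)=p(p'a)=p(q'b)=q'(pb)$ for free, so both defining equations are satisfied and $Par(a,b,pb,pa)$ holds.

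The only point requiring care---and the sole place an error could slip in---is the bookkeeping: keeping the given $p$ distinct from the witness $p'$, and correctly matching which of the four slots of $Par(\cdot,\cdot,\cdot,\cdot)$ plays the role of $c$ and which plays $d$ against the template $p'd=q'c$. There is no genuine obstacle here; once Lemma~\ref{Pre-L1}(iii) is invoked to equate the two equations, the result is essentially immediate.
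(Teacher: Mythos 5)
Your proposal is correct and takes essentially the same route as the paper: both proofs verify the definition directly by using Lemma~\ref{Pre-L1}(iii) to reduce the second defining equation $p'(pa)=q'(pb)$ to the first equation $p'a=q'b$. The only cosmetic difference is the choice of witnesses---the paper takes $p'=p$ itself with $q$ solving $pa=qb$, while you observe that \emph{any} pair solving $p'a=q'b$ works (a mild strengthening) and pick $(e,\,a/b)$; incidentally, your careful slot bookkeeping also avoids the typo in the paper's concluding line, which writes $Par(a,b,pa,pb)$ for what its computation actually establishes, namely $Par(a,b,pb,pa)$.
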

\begin{proof}
Let $c=pb,d=pa$. Take $q\in G$ such that $pa=qb$. Now $
qc=q(pb)=p(qb)=q(pa)=pd$. Hence $Par(a,b,pa,pb)$ holds.
\end{proof}

The following is a diagrammatic depiction of the result of Theorem~\ref{T3}:

\begin{center}
\includegraphics[height=3.5cm]{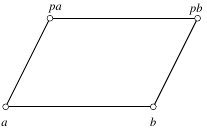}
\end{center}

Let us also illustrate the above theorem by an example.

\begin{example}
Take arbitrarily $a=2,b=7,p=9$ in Example~\ref{E1}. Then we can
find $q=1$ such that $Par(2,7,6,5)$ holds. Note that $p=9$ is already
understood.
\end{example}

\begin{theorem}
\label{T4} Let $G$ be an AG-group. Then $Par(a,b,a^{-1},(ab)^{-1}a)$ holds.
\end{theorem}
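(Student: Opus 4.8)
The plan is to recognise that the vertex $(ab)^{-1}a$ named in the statement is precisely the general fourth-vertex formula specialised to the third point $c=a^{-1}$. In the proof of Lemma~\ref{newlemma}(ii) it was shown that, for arbitrary $a,b,c\in G$, the point $d=cb^{-1}\cdot a$ satisfies $Par(a,b,c,d)$, and by (P4) of Theorem~\ref{T2} this is the unique such point. So it suffices to evaluate this formula at $c=a^{-1}$ and simplify.

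First I would substitute $c=a^{-1}$ to obtain the candidate $d=a^{-1}b^{-1}\cdot a$. Then I would apply Lemma~\ref{Pre-L1}(ix), namely $(ab)^{-1}=a^{-1}b^{-1}$, to rewrite this as $d=(ab)^{-1}a$. These two lines identify the unique fourth vertex of the triple $(a,b,a^{-1})$ with the point in the statement, and $Par(a,b,a^{-1},(ab)^{-1}a)$ follows immediately.

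Should a more self-contained argument be preferred, I would instead invoke Theorem~\ref{T1} directly. Put $x=ab^{-1}$, so that $xb=(ab^{-1})b=(bb^{-1})a=a$ by the left invertive law, and let $y$ be the unique solution of $by=a^{-1}$. Then $b\cdot xy=x\cdot by$ by Lemma~\ref{Pre-L1}(iii), and since $by=a^{-1}$ this equals $x\cdot a^{-1}=ab^{-1}\cdot a^{-1}$. A single application of the left invertive law rewrites $ab^{-1}\cdot a^{-1}=a^{-1}b^{-1}\cdot a$, which is $(ab)^{-1}a$ by Lemma~\ref{Pre-L1}(ix); Theorem~\ref{T1} then yields the claim.

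The proof is therefore short, and the only point needing attention --- the step I expect to be the mild obstacle --- is the algebraic identification $a^{-1}b^{-1}\cdot a=(ab)^{-1}a$ together with the rearrangement $ab^{-1}\cdot a^{-1}=a^{-1}b^{-1}\cdot a$. Both are routine consequences of Lemma~\ref{Pre-L1}(ix) and the left invertive law, so the entire substance of the result lies in observing that the prescribed vertex is nothing more than the standard fourth-vertex formula evaluated at $c=a^{-1}$.
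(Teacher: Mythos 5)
Your proposal is correct, but it reaches the result by a genuinely different route than the paper. The paper proves Theorem~\ref{T4} directly from the definition of $Par$: it takes $p,q$ with $pa=qb$ (so that $q=(b^{-1}a)p$) and computes
$qa^{-1}=[(b^{-1}a)p]a^{-1}=(a^{-1}p)(b^{-1}a)=(a^{-1}b^{-1})(pa)=p[(a^{-1}b^{-1})a]=p[(ab)^{-1}a]$,
using the left invertive law together with Lemma~\ref{Pre-L1} parts (i), (iii) and (ix), thereby exhibiting the witnesses $p,q$ explicitly. You instead recognise the asserted fourth vertex as the general formula $d=cb^{-1}\cdot a$ from Lemma~\ref{newlemma}(ii) specialised at $c=a^{-1}$, followed by the identification $a^{-1}b^{-1}\cdot a=(ab)^{-1}a$ via Lemma~\ref{Pre-L1}(ix); your alternative argument through Theorem~\ref{T1} with $x=ab^{-1}$ and $by=a^{-1}$ is equally sound, and all the individual algebraic steps you cite check out. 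One small economy you could make: the appeal to uniqueness via (P4) of Theorem~\ref{T2} is unnecessary, since the existence half of Lemma~\ref{newlemma}(ii) already yields $Par(a,b,a^{-1},a^{-1}b^{-1}\cdot a)$ outright. What each approach buys: yours is shorter and exposes the theorem as a one-line instance of machinery the paper has already established, whereas the paper's direct computation is self-contained at the level of the definition of $Par$ and, by displaying the relation $qa^{-1}=p[(ab)^{-1}a]$ for an arbitrary pair $p,q$ with $pa=qb$, it connects naturally with the paper's subsequent examples in which the witnesses $p$ and $q$ are computed explicitly.
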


\begin{proof}
Let $p,q\in G$ such that $pa=qb$.
\begin{eqnarray*}
\text{Now }qa^{-1} &=&[(b^{-1}a)p]a^{-1} \\
&=&(a^{-1}p)(b^{-1}a)\text{by invertive law} \\
&=&(a^{-1}b^{-1})(pa)\text{ by Lemma~\ref{Pre-L1} Part(i)} \\
&=&p[(a^{-1}b^{-1})a]\ \text{by Lemma~\ref{Pre-L1} Part(iii)} \\
&=&p[(ab)^{-1}a]\ \text{by Lemma~\ref{Pre-L1} Part(ix)}
\end{eqnarray*}
Hence $Par(a,b,a^{-1},(ab)^{-1}a)$ holds.
\end{proof}

Again we will illustrate the above theorem by a diagrammatic depiction and then
by an example.\vspace{.5cm}

\begin{center}
  \includegraphics[height=4.5cm]{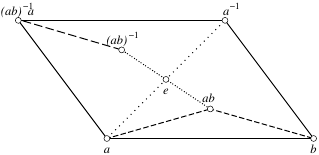}
\end{center}

\begin{example}
Take arbitrarily $a=3,b=8$ in Example~\ref{E1}. Then $Par(3,8,2,11)$
holds. Here $p=1$ and $q=7$.
\end{example}

\begin{theorem}
  \label{T5} Let $G$ be an AG-group and let $a,b\in G$ such that $e\neq a$. Then
  $Par(a,ab,(ae)a^{-1},b)$ holds
\begin{proof}
Let $p,q\in G$ such that $pa=q(ab)$. Now
\begin{eqnarray*}
pa &=&a(qb)\ \text{by Lemma~\ref{Pre-L1} Part(xiii)} \\
ap &=&(qb)a\ \text{by Lemma~\ref{Pre-L1} Part(ii)} \\
qb &=&(ap)a^{-1}\text{by right inverse property} \\
qb &=&(ap)(ea^{-1}) \\
qb &=&(a^{-1}e)(pa)\text{\ by Lemma~\ref{Pre-L1} Part(iv)} \\
qb &=&p((a^{-1}e)a)\text{\ by Lemma~\ref{Pre-L1} Part(iii)} \\
qb &=&p((ae)a^{-1}).
\end{eqnarray*}
\newline
Hence $Par(a,ab,(ae)a^{-1},b)$ holds.
\end{proof}
\end{theorem}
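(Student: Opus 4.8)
The statement to prove is that $Par(a, ab, (ae)a^{-1}, b)$ holds for every $a,b$ in an AG-group $G$ with $a \neq e$. Unwinding the definition, this means I must produce witnesses $p, q \in G$ satisfying the two equations $pa = q(ab)$ and $pb = q\big((ae)a^{-1}\big)$. The plan is to follow the pattern already used in Theorems~\ref{T3} and~\ref{T4}: rather than guessing $p$ and $q$ explicitly, I will use the quasigroup property to arrange the first equation $pa = q(ab)$ (fixing one witness and solving for the other), and then show that the second equation is forced, i.e.\ that it holds automatically as a consequence of the first together with the AG-group identities of Lemma~\ref{Pre-L1}.

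The core of the argument is therefore a computation transforming $pa = q(ab)$ into the required relation between $pb$ (or $qb$) and $(ae)a^{-1}$. I would proceed as follows. First rewrite the right-hand side with Lemma~\ref{Pre-L1}(iii) to move $a$ to the front, obtaining $pa = a(qb)$; then reverse both products by Lemma~\ref{Pre-L1}(ii) to get $ap = (qb)a$. Solving for $qb$ via the right inverse property gives $qb = (ap)a^{-1}$, which I write as $(ap)(ea^{-1})$ so that the four-factor rearrangement in Lemma~\ref{Pre-L1} can be applied to pass to $(a^{-1}e)(pa)$. A second use of Lemma~\ref{Pre-L1}(iii) extracts the free witness, $qb = p\big((a^{-1}e)a\big)$, and finally the left invertive law folds $(a^{-1}e)a$ into $(ae)a^{-1}$, closing the identity.

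I expect the main obstacle to be the bookkeeping of \emph{which} witness multiplies \emph{which} vertex. The chain above most naturally delivers a relation of the shape $qb = p\big((ae)a^{-1}\big)$, whereas the vertex ordering in the claim calls for $pb = q\big((ae)a^{-1}\big)$; keeping the sides of $p$ and $q$ straight through the repeated applications of the reversal and rearrangement identities is the delicate part. Should the derived relation land on a permuted parallelogram rather than the stated one, I would reconcile the two using the symmetry properties (P1) and (P2) established in Theorem~\ref{T2}. Finally I would record where the hypothesis $a \neq e$ enters: when $a = e$ the fourth vertex $(ae)a^{-1}$ collapses to $e$ and the quadruple degenerates, so this assumption is exactly what guarantees a genuine, nontrivial parallelogram.
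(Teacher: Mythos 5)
Your proposal is correct and takes essentially the same route as the paper: the chain $pa=a(qb)$, $ap=(qb)a$, $qb=(ap)a^{-1}=(ap)(ea^{-1})=(a^{-1}e)(pa)=p\bigl((a^{-1}e)a\bigr)=p\bigl((ae)a^{-1}\bigr)$ is step-for-step the paper's own computation, invoking the same parts of Lemma~\ref{Pre-L1} and the left invertive law. The witness-crossing you flag (the chain yields $qb=p\bigl((ae)a^{-1}\bigr)$ rather than $pb=q\bigl((ae)a^{-1}\bigr)$) is present in the paper as well, which concludes $Par(a,ab,(ae)a^{-1},b)$ from this relation without comment, so your explicit attention to that step makes you, if anything, more careful than the source.
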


Let us consider the following example.

\begin{example}
Take arbitrarily $a=5,b=2$ in Example ~\ref{E1}. Then $Par(5,11,1,2)$
holds. Here $p=10$ and $q=4$.
\end{example}

The following corollary provides a fast method to compute the parallelogram
space for any one non-trivial element of an AG-group $G$.

\begin{corollary}
\label{C2} Let $G$ be an AG-group and let $a\in G$ such that $e\neq a$. Then
$Par(a,ae,(ae)a^{-1},e)$ holds.
\end{corollary}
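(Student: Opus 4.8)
The plan is to derive Corollary~\ref{C2} as an immediate specialisation of Theorem~\ref{T5}. Theorem~\ref{T5} asserts that for any $a,b\in G$ with $e\neq a$ we have $Par(a,ab,(ae)a^{-1},b)$. The statement of the corollary, $Par(a,ae,(ae)a^{-1},e)$, has exactly the shape of this conclusion once the free parameter $b$ is chosen appropriately. So the first and essentially only step is to set $b=e$ in Theorem~\ref{T5}.

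With that substitution the second vertex $ab$ becomes $ae$, the third vertex $(ae)a^{-1}$ is unchanged since it does not involve $b$, and the fourth vertex $b$ becomes $e$. Reading off the resulting parallelogram gives precisely $Par(a,ae,(ae)a^{-1},e)$, which is the claim. The hypothesis $e\neq a$ required by Theorem~\ref{T5} is exactly the hypothesis carried over into the corollary, so no new side condition needs to be checked and the derivation is valid for every admissible $a$.

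I expect there to be no genuine obstacle here; the only point deserving a word of care is confirming that the two displayed expressions match termwise after the substitution, in particular that the third vertex really is invariant under the choice of $b$. One could alternatively give a self-contained proof by running the same chain of identities as in Theorem~\ref{T5} with $b=e$, using the left-identity property $eb=b$ where needed, but this would merely reproduce that argument verbatim. The cleaner route, and the one I would write up, is simply to invoke Theorem~\ref{T5} with $b=e$ and note that $ae\cdot$ and the trailing $b=e$ collapse the general four-tuple to the stated one. Hence the corollary follows directly.
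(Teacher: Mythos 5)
Your proposal is correct and coincides with the paper's own (implicit) reasoning: the corollary is stated immediately after Theorem~\ref{T5} precisely because it is the instance $b=e$ of that theorem, with $ab$ becoming $ae$, the fourth vertex becoming $e$, and the third vertex $(ae)a^{-1}$ untouched. Your check that the hypothesis $e\neq a$ transfers unchanged is exactly the only point needing verification, so nothing further is required.
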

Observe that since $e$ is the left unit element in the AG-group, the element
$ae$ is in generally different from $a$ and thus $Par(a,ae,(ae)a^{-1},e)$ is
non-trivial. We again illustrate the corollary with a diagram followed by a
concrete example.
\begin{center}
\includegraphics[height=3.5cm]{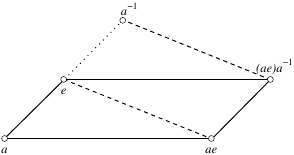}
\end{center}

\begin{example}
Take arbitrarily $a=6$ in Example~\ref{E1}. Then $Par(6,4,1,0)$ holds.
Here $p=3$ and $q=2$.
\end{example}

\begin{theorem}
  \label{T6} Let $G$ be an AG-group, if $Par(a_{1},b_{1},c_{1},d_{1})$ and
  $Par(a_{2},b_{2},c_{2},d_{2})$ then
  $Par(a_{1}a_{2},b_{1}b_{2},c_{1}c_{2},d_{1}d_{2})$ also holds.
\end{theorem}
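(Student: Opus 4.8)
The plan is to argue directly from the definition of the parallelogram relation, the only nontrivial ingredient being mediality, Lemma~\ref{Pre-L1}(i), namely $(ab)(cd)=(ac)(bd)$. First I would unpack the two hypotheses. Since $Par(a_{1},b_{1},c_{1},d_{1})$ holds, there are witnesses $p_{1},q_{1}\in G$ with $p_{1}a_{1}=q_{1}b_{1}$ and $p_{1}d_{1}=q_{1}c_{1}$; likewise $Par(a_{2},b_{2},c_{2},d_{2})$ yields $p_{2},q_{2}\in G$ with $p_{2}a_{2}=q_{2}b_{2}$ and $p_{2}d_{2}=q_{2}c_{2}$.

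The heart of the argument is the choice of witnesses for the product parallelogram: I would set $p=p_{1}p_{2}$ and $q=q_{1}q_{2}$, both of which lie in $G$ by closure. To verify the first defining equation I regroup via the medial law:
\[
p(a_{1}a_{2})=(p_{1}p_{2})(a_{1}a_{2})=(p_{1}a_{1})(p_{2}a_{2})=(q_{1}b_{1})(q_{2}b_{2})=(q_{1}q_{2})(b_{1}b_{2})=q(b_{1}b_{2}),
\]
where the central equality uses the two hypotheses $p_{1}a_{1}=q_{1}b_{1}$ and $p_{2}a_{2}=q_{2}b_{2}$, and both outer equalities are applications of Lemma~\ref{Pre-L1}(i). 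An entirely parallel computation, now invoking $p_{1}d_{1}=q_{1}c_{1}$ and $p_{2}d_{2}=q_{2}c_{2}$, gives
\[
p(d_{1}d_{2})=(p_{1}d_{1})(p_{2}d_{2})=(q_{1}c_{1})(q_{2}c_{2})=q(c_{1}c_{2}).
\]

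Together these two equalities exhibit $p=p_{1}p_{2}$ and $q=q_{1}q_{2}$ as witnesses for $Par(a_{1}a_{2},b_{1}b_{2},c_{1}c_{2},d_{1}d_{2})$, completing the proof. I do not anticipate a genuine obstacle: once the witnesses are guessed as the coordinatewise products of the original ones, the verification is a mechanical double use of mediality. The only step requiring a little foresight is recognising that the medial identity is precisely what allows the \emph{product of witnesses} to pair off correctly against the \emph{product of vertices}; notably, no appeal to inverses, the left identity, or the invertive law is needed, so the same argument will transparently carry over to the announced generalisation to arbitrary medial quasigroups.
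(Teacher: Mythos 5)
Your proof is correct, but it takes a genuinely different route from the paper's. The paper does not argue from the defining witnesses $p,q$ at all: it first passes through the characterization of Theorem~\ref{T1} (existence of $x,y$ with $xb=a$, $by=c$, $b\cdot xy=d$), extracts $x_{1},y_{1},x_{2},y_{2}$ from the two hypotheses, and then constructs new data $x_{3}=x_{1}x_{2}$ and $y_{3}=c_{2}b_{1}^{-1}\cdot c_{1}b_{2}^{-1}$, verifying $b_{1}b_{2}\cdot x_{3}y_{3}=d_{1}d_{2}$ by a longer medial-law chain that also invokes inverses and the left identity (e.g.\ the step $b_{1}b_{2}\cdot b_{1}^{-1}b_{2}^{-1}=e$). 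Your argument stays at the level of the raw definition of $Par$: taking $p=p_{1}p_{2}$, $q=q_{1}q_{2}$ and applying Lemma~\ref{Pre-L1}(i) twice in each direction settles both defining equations in two lines, and the applications of mediality are all legitimate. What your approach buys is real: it is shorter, it needs no detour through Theorem~\ref{T1}, and---as you note---it uses neither inverses nor the identity element, so it is valid verbatim in any medial quasigroup. That last point is more than aesthetic: the paper's final theorem claims the analogous closure result for medial quasigroups with a proof said to be analogous to that of Theorem~\ref{T7}, yet the paper's own proof of Theorem~\ref{T6} depends on $e$ and on inverses, which a general medial quasigroup does not possess; your witness-product argument is exactly what is needed to make that generalization sound. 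The only thing the paper's route offers in exchange is that it produces the $x,y$ data in the format of Theorem~\ref{T1}, the form used elsewhere in the paper for explicitly computing the fourth vertex.
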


\begin{proof}
  Since $Par(a_{1},b_{1},c_{1},d_{1})$ and $Par(a_{2},b_{2},c_{2},d_{2})$ hold,
  by Theorem ~\ref{T1} there exist $x_{1},y_{1},x_{2},y_{2}\in G$ such that
\begin{equation*}
x_{1}a_{1}=b_{1},b_{1}y_{1}=c_{1},b_{1}\cdot
x_{1}y_{1}=d_{1},x_{2}a_{2}=b_{2},b_{2}y_{2}=c_{2},b_{2}\cdot
x_{2}y_{2}=d_{2}
\end{equation*}
Taking $x_{3}=x_{1}x_{2},y_{3}=c_{2}b_{1}^{-1}\cdot c_{1}b_{2}^{-1}$, we
have $x_{3}\cdot a_{1}a_{2},b_{1}b_{2}\cdot y_{3}=c_{1}c_{2}$. Now
\begin{eqnarray*}
b_{1}b_{2}\cdot x_{3}y_{3}
&=&(b_{1}b_{2})\{(x_{1}x_{2})(c_{2}b_{1}^{-1}\cdot c_{1}b_{2}^{-1})\} \\
&=&(b_{1}b_{2})\{(x_{1}x_{2})(c_{2}c_{1}\cdot b_{1}^{-1}b_{2}^{-1})\} \\
&=&(x_{1}x_{2})\{(c_{2}c_{1})(b_{1}b_{2}\cdot b_{1}^{-1}b_{2}^{-1})\} \\
&=&(x_{1}x_{2})(c_{2}c_{1}\cdot e) \\
&=&x_{1}x_{2}\cdot c_{1}c_{2} \\
&=&x_{1}c_{1}\cdot x_{2}c_{2} \\
&=&(b_{1}\cdot x_{1}y_{1})\cdot (b_{2}\cdot x_{2}y_{2}) \\
&=&d_{1}d_{2}
\end{eqnarray*}
Hence by Theorem ~\ref{T1} we are done.
\end{proof}

\begin{theorem}
\label{T7} Let $G$ be an AG-group then the parallelogram space ($G,Par)$ is
again an AG-group.
\end{theorem}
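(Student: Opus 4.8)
The plan is to realise the parallelogram space as a subgroupoid of the direct power $G^{4}=G\times G\times G\times G$ equipped with the componentwise operation, and then to check the defining features of an AG-group for this subgroupoid. Concretely, I would identify $(G,Par)$ with the set
\[
S=\{(a,b,c,d)\in G^{4}\ :\ Par(a,b,c,d)\text{ holds}\},
\]
giving $S$ the operation inherited from $G^{4}$, where $(a_{1},b_{1},c_{1},d_{1})(a_{2},b_{2},c_{2},d_{2})=(a_{1}a_{2},b_{1}b_{2},c_{1}c_{2},d_{1}d_{2})$. The first observation is that $G^{4}$ is itself an AG-group: the left invertive law holds componentwise, the element $E=(e,e,e,e)$ is a left identity, and inverses are taken componentwise. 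It therefore suffices to prove that $S$ is a sub-AG-group of $G^{4}$, i.e.\ that $S$ is nonempty, closed under the operation, contains a left identity, and is closed under inverses.

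Closure under the operation is precisely the content of Theorem~\ref{T6}, which states that componentwise products of parallelograms are again parallelograms; this is the engine of the argument and is already available. For the left identity, I would note that $E=(e,e,e,e)$ lies in $S$, since $Par(e,e,e,e)$ holds trivially (take $p=q=e$), and that $E(a,b,c,d)=(a,b,c,d)$ for every $(a,b,c,d)\in S$, so $E$ serves as a left identity for $S$.

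The remaining point, closure under inverses, is where the actual computation sits, though it is short. Given $Par(a,b,c,d)$, Theorem~\ref{T1} supplies $x,y\in G$ with $xb=a$, $by=c$ and $b\cdot xy=d$. Applying inverses to each of these three equations and using Lemma~\ref{Pre-L1}(ix), namely $(uv)^{-1}=u^{-1}v^{-1}$, yields $x^{-1}b^{-1}=a^{-1}$, $b^{-1}y^{-1}=c^{-1}$ and $b^{-1}\cdot x^{-1}y^{-1}=d^{-1}$. Thus $x^{-1}$ and $y^{-1}$ witness $Par(a^{-1},b^{-1},c^{-1},d^{-1})$ through Theorem~\ref{T1}, so $(a,b,c,d)^{-1}=(a^{-1},b^{-1},c^{-1},d^{-1})\in S$.

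Finally, I would assemble these facts: $S$ is closed under the componentwise operation and under inverses, it contains the left identity $E$ of $G^{4}$, which also acts as a left identity on $S$, and the left invertive law, being a universally quantified identity valid throughout $G^{4}$, restricts to $S$. Hence $S$ is an AG-group, which is the assertion of Theorem~\ref{T7}. I expect the only genuine subtlety to be bookkeeping rather than depth: one must confirm that the ambient left identity and ambient inverses genuinely restrict to $S$ (which is exactly what the two closure checks guarantee) and that the defining equations of $Par$ have been matched correctly under inversion. The heavy lifting has effectively been done in Theorem~\ref{T6}, and everything else is inherited from the product AG-group structure.
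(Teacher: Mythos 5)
Your proposal is correct and follows essentially the same route as the paper: both realise the parallelogram space as the set of quadruples satisfying $Par$ under the componentwise operation, invoke Theorem~\ref{T6} for closure, take $(e,e,e,e)$ as left identity and componentwise inverses, and let the left invertive law be inherited componentwise, so your sub-AG-group-of-$G^{4}$ framing is a cosmetic repackaging of the paper's direct verification. The one place you go beyond the paper is the check, via Theorem~\ref{T1} and Lemma~\ref{Pre-L1}(ix), that $(a^{-1},b^{-1},c^{-1},d^{-1})$ actually satisfies $Par$; the paper's proof asserts the inverse element without verifying it lies in the space at all (and even misprints it as $Par(a^{-1},c,b,d)$), so your inverse-closure step fills a small genuine gap rather than duplicating an existing argument.
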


\begin{proof}
Define a binary operation $@$ on $(G,Par)$ by
\begin{eqnarray*}
Par(a_{1},b_{1},c_{1},d_{1})@Par(a_{2},b_{2},c_{2},d_{2})
&=&Par(a_{1}a_{2},b_{1}b_{2},c_{1}c_{2},d_{1}d_{2}) \\
& & \quad \text{for all }a_{1},b_{1},c_{1},d_{1,}a_{2},b_{2},c_{2},d_{2} \in G
\end{eqnarray*}
By Theorem ~\ref{T6} ($G,Par)$ is closed under $@$.

\noindent Let
$x=Par(a_{1},b_{1},c_{1},d_{1}),y=Par(a_{2},b_{2},c_{2},d_{2}),z=Par(a_{3},b_{3},c_{3},d_{3})
{\in} (G,Par)$ then
\begin{eqnarray*}
(x@y)@z
&=&[Par(a_{1},b_{1},c_{1},d_{1})@Par(a_{2},b_{2},c_{2},d_{2})]@Par(a_{3},b_{3},c_{3},d_{3})
\\
&=&Par(a_{1}a_{2},b_{1}b_{2},c_{1}c_{2},d_{1}d_{2})@Par(a_{3},b_{3},c_{3},d_{3})
\\
&=&Par(a_{1}a_{2}\cdot a_{3},b_{1}b_{2}\cdot b_{3},c_{1}c_{2}\cdot
c_{3},d_{1}d_{2}\cdot d_{3}) \\
&=&Par(a_{3}a_{2}\cdot a_{1},b_{3}b_{2}\cdot b_{1},c_{3}c_{2}\cdot
c_{1},d_{3}d_{2}\cdot d_{1}) \\
&=&Par(a_{3}a_{2},b_{3}b_{2},c_{3}c_{2},d_{3}d_{2})@Par(a_{1},b_{1},c_{1},d_{1})
\\
&=&[Par(a_{3},b_{3},c_{3},d_{3})@Par(a_{2},b_{2},c_{2},d_{2})]@Par(a_{1},b_{1},c_{1},d_{1})
\\
&=&(z@y)@x
\end{eqnarray*}
Thus ($G,Par)$ is an AG-groupoid under $@$.

\noindent $Par(e,e,e,e)\in (G,Par)$ plays the role of left identity as for all $
Par(a,b,c,d)\in (G,Par)$, we have
\begin{equation*}
Par(e,e,e,e)@Par(a,b,c,d)=Par(ea,eb,ec,ed)=Par(a,b,c,d)
\end{equation*}
Every element $Par(a,b,c,d)$ in ($G,Par)$ has an inverse $Par(a^{-1},c,b,d)$
as
\begin{eqnarray*}
Par(a,b,c,d)@Par(a^{-1},b^{-1},c^{-1},d^{-1}) &=&Par(e,e,e,e)\text{ and} \\
Par(a^{-1},b^{-1},c^{-1},d^{-1})@Par(a,b,c,d) &=&Par(e,e,e,e)
\end{eqnarray*}
Hence ($G,Par,@)$ is an AG-group.
\end{proof}

\begin{corollary}
  Let $G$ be an abelian group, then ($G,Par,@)$ is an abelian group.
\end{corollary}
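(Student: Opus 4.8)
The plan is to deduce this from Theorem~\ref{T7} together with the standard fact, recalled in the introduction (see \cite{SA1}), that an AG-group in which either commutativity or associativity holds is already an abelian group. Thus the whole task reduces to showing that the operation $@$ becomes commutative once the underlying structure $G$ is an abelian group. In other words, once $(G,Par,@)$ is known to be an AG-group, the abelian-group property is purchased cheaply by exhibiting commutativity of $@$.

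First I would check that an abelian group is indeed an AG-group, so that Theorem~\ref{T7} genuinely applies: the group identity is in particular a left identity, inverses exist, and the left invertive law $(xy)z=(zy)x$ collapses to $xyz=zyx$, which holds by commutativity. Consequently $(G,Par,@)$ is an AG-group by Theorem~\ref{T7}, and it only remains to promote this AG-group to an abelian group using the cited equivalence.

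Next I would establish commutativity of $@$ directly from its definition. Since $@$ acts coordinatewise through the multiplication of $G$, namely $Par(a_1,b_1,c_1,d_1)@Par(a_2,b_2,c_2,d_2)=Par(a_1a_2,b_1b_2,c_1c_2,d_1d_2)$, the commutativity of $G$ yields $a_1a_2=a_2a_1$ in each coordinate, and hence $Par(a_1,b_1,c_1,d_1)@Par(a_2,b_2,c_2,d_2)=Par(a_2a_1,b_2b_1,c_2c_1,d_2d_1)=Par(a_2,b_2,c_2,d_2)@Par(a_1,b_1,c_1,d_1)$. Combining this with the fact that $(G,Par,@)$ is an AG-group and with the equivalence of commutativity and the abelian-group property for AG-groups, I conclude that $(G,Par,@)$ is an abelian group.

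I do not expect a genuine obstacle here; the only point requiring care is that $@$ is well defined, i.e.\ independent of how a parallelogram is represented by its vertices, but this is already guaranteed by Theorems~\ref{T6} and~\ref{T7}, which I may assume. So the corollary is essentially a one-line consequence: componentwise multiplication inherits commutativity from $G$, and a commutative AG-group is abelian.
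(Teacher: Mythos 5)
Your proposal is correct and follows exactly the route the paper intends: the corollary is stated without proof as an immediate consequence of Theorem~\ref{T7}, and your argument---abelian groups are AG-groups, $@$ inherits commutativity componentwise, and a commutative AG-group is abelian (the equivalence cited from \cite{SA1})---supplies precisely the details the paper leaves implicit. No gaps; the well-definedness point you flag is indeed handled by Theorem~\ref{T6}.
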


We can now generalize the above results to the following theorem.
\begin{theorem}
  Let $G$ be a medial quasigroup, then ($G,Par,@)$ is also a medial quasigroup.
\end{theorem}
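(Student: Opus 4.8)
The plan is to recognise $(G,Par,@)$ as a subquasigroup of the direct fourth power $G^{4}$ equipped with its componentwise product, and then to exploit the fact that the medial identity $xy\cdot zw=xz\cdot yw$, being an identity, is inherited by direct powers and by subquasigroups. Concretely I would identify the underlying set with $\{(a,b,c,d)\in G^{4}:Par(a,b,c,d)\text{ holds}\}$, which is nonempty because property (P4) holds for medial quasigroups (so every triple $a,b,c$ completes to a parallelogram), and I would observe that $@$ is exactly the restriction to this set of the componentwise operation of $G^{4}$. Once this set is shown to be closed under the three quasigroup operations of $G^{4}$, both the quasigroup property and mediality of $(G,Par,@)$ follow at once.

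First I would prove the medial analogue of Theorem~\ref{T6}, namely closure of $Par$ under the componentwise product. Given $Par(a_{1},b_{1},c_{1},d_{1})$ and $Par(a_{2},b_{2},c_{2},d_{2})$ with witnesses $p_{1},q_{1}$ and $p_{2},q_{2}$, I would set $P=p_{1}p_{2}$ and $Q=q_{1}q_{2}$ and apply mediality:
\begin{equation*}
P(a_{1}a_{2})=(p_{1}a_{1})(p_{2}a_{2})=(q_{1}b_{1})(q_{2}b_{2})=Q(b_{1}b_{2}),
\end{equation*}
and likewise $P(d_{1}d_{2})=Q(c_{1}c_{2})$, which witnesses $Par(a_{1}a_{2},b_{1}b_{2},c_{1}c_{2},d_{1}d_{2})$. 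Unlike the proof of Theorem~\ref{T6}, this argument invokes neither inverses nor a left identity, so it survives the passage to a general medial quasigroup.

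The main obstacle is the quasigroup property itself: a medial quasigroup carries no identity and no inverses, so I cannot reuse the identity $Par(e,e,e,e)$ or the inverse elements from the proof of Theorem~\ref{T7}, and must instead show directly that $Par$ is closed under left and right division in $G^{4}$. For right division I would combine the witnesses through $/$ rather than through $\cdot$: with $P=p_{1}/p_{2}$ and $Q=q_{1}/q_{2}$, and writing $u=a_{1}/a_{2}$, $v=b_{1}/b_{2}$, mediality gives $(Pu)(p_{2}a_{2})=p_{1}a_{1}$ and $(Qv)(q_{2}b_{2})=q_{1}b_{1}$; since $p_{1}a_{1}=q_{1}b_{1}$ and $p_{2}a_{2}=q_{2}b_{2}$, right cancellation yields $Pu=Qv$, i.e.\ $P(a_{1}/a_{2})=Q(b_{1}/b_{2})$. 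The $d,c$ pair is handled identically, so the componentwise quotient is again a parallelogram, and the mirror-image computation disposes of left division. I expect the only delicate point to be the bookkeeping of keeping the two defining equations of each parallelogram in step while the ambient operation switches among $\cdot$, $\backslash$ and $/$.

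Finally I would assemble the pieces. Closure under $\cdot$, $\backslash$ and $/$ makes $Par$ a subquasigroup of $G^{4}$, hence a quasigroup under $@$; and the medial law, holding in $G$ and therefore componentwise in $G^{4}$, is inherited by this subquasigroup. Thus $(G,Par,@)$ is a medial quasigroup, as claimed.
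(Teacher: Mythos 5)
Your proposal is correct, but it takes a genuinely different route from the paper, which simply declares the proof ``analogous to that of Theorem~\ref{T7}.'' That remark is actually problematic as stated: the proof of Theorem~\ref{T7} verifies the AG-group axioms by exhibiting the left identity $Par(e,e,e,e)$ and componentwise inverses, neither of which exists in a general medial quasigroup, and its closure step (Theorem~\ref{T6}) is itself proved via Theorem~\ref{T1}, whose statement and proof also use the left identity and inverses. So the paper's one-line proof does not literally transfer, and your argument supplies exactly the missing content. Your closure-under-product computation, using only the witnesses $p_{i},q_{i}$ and mediality --- $(p_{1}p_{2})(a_{1}a_{2})=(p_{1}a_{1})(p_{2}a_{2})=(q_{1}b_{1})(q_{2}b_{2})=(q_{1}q_{2})(b_{1}b_{2})$ and likewise for the $d,c$ pair --- is cleaner and strictly more general than the paper's Theorem~\ref{T6}; and your treatment of the quasigroup axioms, by proving closure of the relation $Par$ under the componentwise divisions of $G^{4}$ (e.g.\ $(Pu)(p_{2}a_{2})=(Pp_{2})(ua_{2})=p_{1}a_{1}$ followed by right cancellation, with the mirror computation for $\backslash$), is the correct identity-free substitute for the identity-and-inverses argument. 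Realizing $(G,Par,@)$ as a subquasigroup of the direct power $G^{4}$ then makes mediality of $@$ automatic, since the medial law is an identity and identities pass to powers and subquasigroups. What each approach buys: the paper's route is fast for AG-groups because the group-like machinery is available, while yours is self-contained, works verbatim for arbitrary medial quasigroups, and in fact gives a rigorous proof where the paper offers only an assertion of analogy. One cosmetic point: for nonemptiness you can avoid invoking (P4) altogether, since $Par(a,a,b,b)$ always holds with witnesses $p=q$.
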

\noindent The proof of this theorem is analogous to that of Theorem~\ref{T7}.


\end{document}